\newtheorem{thm}{Theorem}[section]
\newtheorem{lemma}[thm]{Lemma}
\newtheorem{cor}[thm]{Corollary}
\newtheorem{defi}[thm]{Definition}
\newtheorem*{thm*}{Theorem}
\theoremstyle{definition}
\newcommand{\RR}{\mathbb{R}}
\newcommand{\be}{\begin{equation}}
\newcommand{\ee}{\end{equation}}
\newcommand{\bee}{\begin{equation*}}
\newcommand{\eee}{\end{equation*}}
\newcommand{\bea}{\begin{eqnarray}}
\newcommand{\eea}{\end{eqnarray}}
\newcommand{\bs}{\begin{split}}
\newcommand{\es}{\end{split}}
\title[degenerate elliptic equations with measures]{Some possibly degenerate elliptic  problems with measure data and non linearity on the boundary }
\author{Thierry Gallou\"et}
\author{Yannick Sire}\thanks{
{\it TG}:
Universit\'e Aix-Marseille 1 --
LATP --
Marseille, France 
{\tt gallouet@cmi.univ-mrs.fr}\\
{\it YS}:
Universit\'e Aix-Marseille 3, Paul C\'ezanne --
LATP --
Marseille, France 
{\tt sire@cmi.univ-mrs.fr}
}
\begin{document}
\maketitle
\begin{abstract}
The goal of this paper is to study some possibly degenerate elliptic equation in a bounded domain  with a nonlinear boundary condition involving measure data. We investigate two types of problems: the first one deals with the laplacian in a bounded domain with measure supported on the domain and on the boundary. A second one deals with the same type of data but involves a degenerate weight in the equation. In both cases, the nonlinearity under consideration lies on the boundary. For the first problem, we prove an optimal regularity result, whereas for the second one the optimality is not guaranteed but we provide however regularity estimates. 
\end{abstract}
\selectlanguage{francais}
\begin{abstract}
Le but de cet article est l'\'etude d'\'equations elliptiques pouvant d\' eg\'en\'erer,  \`a donn\'ees mesures, dans un domaine born\'e, et avec nonlin\'earit\'e au bord du domaine. On \'etudie deux types de probl\`emes: un permier est une \'equation elliptique non d\'egn\'er\'ee dans un domaine born\'e  avec des doinn\'ees  mesures, support\'ees \`a la fois \`a l'int\'erieur du domaine et sur le bord de celui-ci. On traite dans une deuxi\`eme partie un probl\`eme elliptique d\'eg\'en\'er\'e. On \'etablit des r\'esultat d'existence et de r\'egularit\'e dans les deux cas. Dans les deux probl\`emes consid\'er\'es, la nonlin\'earit\'e est au bord du domaine.    
\end{abstract}
\selectlanguage{english}

\tableofcontents
\tableofcontents

\section{Introduction}

Let $\Omega$ be a smooth bounded open subset of $\RR^N$ for $N \geq 2$. Let $\left \{ \Gamma_1, \Gamma_2 \right \}$ be a measurable partition of $\partial \Omega$ such that $|\Gamma_1|>0$. Consider $\mu_1 \in \mathcal M( \overline \Omega)$ and $\mu_2 \in \mathcal M(\partial \Omega)$, two Radon measures supported on $\Omega$ and $\Gamma_2$ respectively. 

The paper is devoted to the study of the two following problems for $\gamma >1$
\begin{equation}\label{boundary1}
\left \{ 
\begin{array}{c}
\Delta u =\mu_1\,\,\mbox{in $\Omega$},\\
u=0\,\,\mbox{on $\Gamma_1$},\\
\partial_\nu u=\mu_2-|u|^{\gamma-1}u\,\,\,\mbox{on $\Gamma_2 $.}
\end{array} \right. 
\end{equation}

\begin{equation}\label{boundary2}
\left \{ 
\begin{array}{c}
\nabla \cdot (d(x,\partial \Omega)^\alpha \nabla) u =\mu_1\,\,\mbox{in $\Omega$},\\
u=0\,\,\mbox{on $\Gamma_1 $},\\
d(x,\partial \Omega)^\alpha  \partial_\nu u=\mu_2-|u|^{\gamma-1}u\,\,\,\mbox{on $\Gamma_2 $.}
\end{array} \right. 
\end{equation}
where $\alpha \in (-1,1)$ and $d(x,\partial \Omega)$ denotes the distance from a point $x \in \Omega$ to the boundary $\partial \Omega$. Notice that when $\alpha=0$, problem \eqref{boundary2} reduces to \eqref{boundary1}. The weight $d(x,\partial \Omega)^\alpha $ degenerates at the boundary either by explosion for $\alpha <0$ or to $0$ for  $\alpha >0$. We will see that there is a difference, as far as regularity is concerned , between the cases $\alpha \neq 0$ and $\alpha=0$ and we will deal with these problems seperately.

Several works have been devoted to the study of elliptic equations with non smooth data. Particularly, the following equation 
$$-\Delta u +|u|^{\gamma-1} u=f \in L^1 \,\,\,\,\mbox{in}\,\,\Omega$$
with Dirichlet boundary conditions has been investigated by several authors starting with the works of Br\'ezis and Strauss \cite{BS}. In the case of $\Omega=\RR^N$, we refer the reader to the works  \cite{BBC} and \cite{GM}. In this latter work, the authors investigate the range $\gamma \leq 1$, and prove that a growth condition on $f$ is necessary to ensure existence and uniqueness. The case of the $p-$laplacian instead of the laplacian has been investigated in \cite{BGV}. 

When $f$ is a measure, the picture is more complicated. In \cite{BG}, the authors solved the existence and regularity problem for measure data in smooth domains for Leray-Lions operator of the type $-\nabla \cdot a(x,u,\nabla u)$. However one can prove existence and regularity if and only if $f \in L^1(\Omega)+W^{-2,\gamma}(\Omega)$. This latter condition is equivalent to $|f|(A)=0$ for every borelian susbet $A$ of $\Omega$ having zero $W^{2,\gamma'}-$capacity (here $\gamma'$ is the conjugate exponent of $\gamma$).

Under some assumptions on $\gamma$, we prove that our problems \eqref{boundary1}-\eqref{boundary2} admit a solution and we study their regularity. 

\medskip

The motivation to investigate the degenerate problem \eqref{boundary2} comes from recent investigations on non local operators. Indeed, the following result has been proved by Caffarelli and Silvestre (see \cite{cafS}): Given $s\in(0,1)$, let $\alpha=1-2s\in(-1,1)$. Using variables $(x,y)\in \RR^{n+1}_+ 
:=(0,+\infty)\times\RR^N$, the space $H^s(\RR^N)$ coincides with the trace on $\partial\RR^{N+1}_{+}$ of 
$$H^1(x^\alpha):= \left\{u\in H^1_{loc}(\RR^{N+1}_{+})\;:\;\int_{\RR^{n+1}_{+}}x^\alpha\left(u^2+\left| \nabla u \right|^2 \right) dxdy<+\infty\right\}.$$  
In other words, given any 
function $u\in H^1(x^\alpha)\cap C(\overline{\RR^{N+1}_{+}})$, $v:=\left. u \right|_{\partial\RR^{N+1}_{+}}\in H^s(\RR^N)$ and there exists a constant $C=C(n,s)>0$ such that
$$
\| v\|_{H^s(\RR^N)}\le C \| u \|_{ H^1(x^\alpha)}.  
$$
So, by a standard density argument (see \cite{CPSC}), every $u\in H^1(x^\alpha)$ has a well-defined trace $v\in H^s(\RR^N)$.
Conversely, any $v\in H^s(\RR^N)$ is the trace of a function $u\in H^1(x^\alpha)$. In addition, the function $u\in H^1(x^\alpha)$ defined by
\begin{equation} \label{argmin} 
u:=\arg\min\left\{ \int_{\RR^{N+1}_{+}}x^\alpha \left| \nabla w \right|^2\;dx \; : \; \left. w \right|_{\partial\RR^{N+1}_{+}}=v\right\} 
\end{equation} 
solves the PDE 
\begin{equation}\label{bdyFrac2} 
\left \{
\begin{aligned} 
{\rm div}\, (x^\alpha \nabla u)&=0 \qquad 
{\mbox{ in $\RR^{N+1}_+$}} 
\\
u&= v  
\qquad{\mbox{ on $\partial\RR^{N+1}_+$.}}\end{aligned}\right . \end{equation} 
By standard elliptic regularity, $u$ is smooth in $\RR^{N+1}_{+}$. It turns out that 
$x^\alpha u_{x} (x,\cdot)$ converges in $H^{-s}(\RR^N)$ to a distribution $f\in H^{-s}(\RR^N)$, as $x\to 0^+$ i.e. $u$ formally solves
\begin{equation}\label{bdyFrac3} 
\left \{
\begin{aligned} 
{\rm div}\, (x^\alpha \nabla u)&=0 \qquad 
{\mbox{ in $\RR^{N+1}_+ 
$}} 
\\
-x^\alpha u_x &= f  
\qquad{\mbox{ on $\partial\RR^{N+1}_+$.}}\end{aligned}\right . \end{equation}
Consider
the Dirichlet-to-Neumann operator 
$$
\Gamma_\alpha: 
\left\{
\begin{aligned}
H^s(\RR^N)&\to H^{-s}(\RR^N)\\
v&\mapsto \Gamma_{\alpha}(v)=  f:=
-x^\alpha u_x|_{\partial \RR^{N+1}_+}, 
\end{aligned}
\right.
$$
where $u$ is the solution of \eqref{argmin}--\eqref{bdyFrac3}. 
\noindent Given $f\in H^{-s}(\RR^N)$, a function $v\in H^s(\RR^N)$ solves the equation 
\begin{equation} \label{linear} 
\frac1{d_{N,s}}(-\Delta)^{s} v=f\qquad\mbox{in $\mathbb R^N$}
\end{equation} 
if and only if its lifting $u\in H^1(x^\alpha)$ solves $u= v{\mbox{ on $\partial\RR^{N+1}_+$}}$ and
\begin{equation}\label{lifted}  
\left \{
\begin{aligned} 
{\rm div}\, (x^\alpha \nabla u)&=0 \qquad 
{\mbox{ in $\RR^{N+1}_+$}} 
\\
-x^\alpha u_x &= f  
\qquad{\mbox{ on $\partial\RR^{N+1}_+.$}}
\end{aligned}\right. 
\end{equation} 
Here $d_{N,s}$ is a normalizing constant. Equation $\eqref{linear}$ involves the fractional laplacian , which symbol is a Fourier  multiplier  $|\xi|^{2s}$. We refer the reader to \cite{landkof} for a potential-theoretic study of the fractional laplacian and Riesz kernels. 

\medskip

A quick look at the previous development shows that the weight $x^\alpha $ represents the distance of a point $(x,y) \in \RR^{N+1}_+$ to the boundary $\partial \RR^{N+1}_+$ to the power $\alpha$. It is then natural to consider a somehow "localized" version of it, namely problem \eqref{boundary2}, as a starting point of regularity study of fractional order operators. We postpone to future work the study of the equation 
$$(-\Delta)^s+|u|^{\gamma-1}u = f\,\,\,\,\mbox{in}\,\,\RR^N$$
where $f \in L_{loc}^1(\RR^N)$. This problem is more challenging since it requires local estimates, as done for instance in \cite{BGV} in the case $s=1$. More precisely, we would need local estimates independent of the radius $R$ of the following boundary problem  

\begin{equation}\label{boundary3}
\left \{ 
\begin{array}{c}
\nabla \cdot (x^\alpha \nabla u)  =\mu_1,\,\,\mbox{in $B_R^+$},\\
u=0,\,\,\mbox{on $\partial^+  B_R^+$},\\
x^\alpha  \partial_x u=\mu_2-|u|^{\gamma-1}u,\,\,\,\mbox{on $\partial^0 B_R^+ $}
\end{array} \right. 
\end{equation}
where 
$$B_R^+=\left \{ (x,y) \in \RR^+ \times \RR^N,\,\,\,|(x,y)|<R\right \},$$
$$\partial^+ B_R^+=\left \{ (x,y) \in \RR^+ \times \RR^N,\,\,\,|(x,y)|=R\right \},$$
$$\partial^0 B_R^+=\left \{ (0,y),\,\,y \in  \RR^N,\,\,\,|(0,y)|<R\right \}. $$

 It has to be noticed that the weight $d(x,\partial \Omega)^\alpha$, since $\alpha \in (-1,1)$, is degenerate at the boundary of the smooth domain $\Omega$. However, this function falls into a specific class of functions, namely $A_2$ weights introduced by Muckenhoupt \cite{muck}. Indeed, a function $w(x)$ defined on $\RR^N$ is said to be $A_2$
if the following holds: there exists $C>0$ such that 
$$\sup_B \Big \{ \frac{1}{|B|}\int_B w(x)\,dx \Big \} \Big \{ \frac{1}{|B|}\int_B w(x)^{-1}\,dx \Big \}\leq C, $$   
 
for every ball $B \subset \RR^N$. In our context the $A_2$ condition writes 
$$\Big \{ \frac{1}{R^n}\int_0^R (R-r)^\alpha r^{n-1}\,dr \Big \} \Big \{ \frac{1}{R^n}\int_0^R (R-r)^{-\alpha} r^{n-1}\,dr \Big \}\leq C, $$ 
which is clearly satisfied since $\alpha \in (-1,1). $

The class $A_2$ enjoys several very nice properties that we will describe further in the paper. First, we introduce the following notion of weak solution, valid for both problems.   
\begin{defi}
A function $u \in W^{1,1}(\Omega)$ is a weak solution of \eqref{boundary2} (or \eqref{boundary1} taking $\alpha=0$) if for all $\varphi \in C^\infty (\RR^N)$ with $\varphi=0$ on $\Gamma_1$ 
\begin{equation}\label{weak}
\int_{\Omega} d(x,\partial \Omega)^\alpha \nabla u \cdot \nabla \varphi + \int_\Omega \varphi d\mu_1=\int_{\Gamma_2}\varphi d\mu_2-\int_{\Gamma_2} |u|^{\gamma-1} u \varphi.
\end{equation} 
\end{defi}

We now state our results. 

Our main result is an existence and regularity result for \eqref{weak}. 

\begin{thm}\label{exist1}
For every $\mu_1 \in \mathcal M(\overline \Omega)$, $\mu_2 \in \mathcal M (\partial \Omega)$ supported on $\Omega$ and $\Gamma_2$ respectively, $\gamma >1$ and $\alpha=0$, there exists a weak solution $u$ to \eqref{boundary1} such that 
$$u \in \bigcap_{ 1 <q <\frac{N}{N-1}} W^{1,q}(\Omega).$$
As a consequence, the trace $Tu$ of $u$ on $\partial \Omega$ satisfies 
$$Tu \in \bigcap_{ 1  \leq q <\frac{N}{N-1}} W^{1-\frac{1}{q},q}(\partial \Omega)$$
\end{thm}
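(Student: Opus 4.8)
The plan is to solve \eqref{boundary1} by the approximation scheme of Boccardo--Gallou\"et, adapted to the boundary absorption. First I would regularize the data, choosing smooth $f_n$ on $\Omega$ and $g_n$ on $\Gamma_2$ with $f_n\rightharpoonup\mu_1$ and $g_n\rightharpoonup\mu_2$ weakly-$*$ and with uniformly bounded mass $\|f_n\|_{L^1(\Omega)}\le\|\mu_1\|_{\mathcal M}$, $\|g_n\|_{L^1(\Gamma_2)}\le\|\mu_2\|_{\mathcal M}$. For each $n$ the regularized problem ($\Delta u_n=f_n$, $u_n=0$ on $\Gamma_1$, $\partial_\nu u_n=g_n-|u_n|^{\gamma-1}u_n$ on $\Gamma_2$) has a solution $u_n$ in $V:=\{v\in H^1(\Omega):v|_{\Gamma_1}=0\}$, obtained by minimizing the strictly convex, coercive, lower semicontinuous functional
$$J_n(v)=\tfrac12\int_\Omega|\nabla v|^2+\tfrac1{\gamma+1}\int_{\Gamma_2}|v|^{\gamma+1}+\int_\Omega f_n v-\int_{\Gamma_2}g_n v,$$
coercivity coming from Poincar\'e's inequality on $V$ (valid since $|\Gamma_1|>0$) together with the nonnegative absorption term.

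The technical core is a set of $n$-uniform a priori estimates obtained by truncation. Writing $T_k(s)=\max(-k,\min(k,s))$ and testing the weak formulation with $T_k(u_n)$, the absorption term $\int_{\Gamma_2}|u_n|^{\gamma-1}u_n\,T_k(u_n)=\int_{\Gamma_2}|u_n|^\gamma\min(|u_n|,k)$ is nonnegative and can be dropped, which yields the fundamental inequality $\int_\Omega|\nabla T_k(u_n)|^2\le Ck$ with $C$ depending only on the total masses. Feeding this into the Sobolev inequality on $V$ gives the super-level bound $|\{|u_n|>k\}|\le Ck^{-N/(N-2)}$, and then the standard splitting $\{|\nabla u_n|>\lambda\}\subset\{|\nabla T_k(u_n)|>\lambda\}\cup\{|u_n|>k\}$ optimized at $k\sim\lambda^{(N-2)/(N-1)}$ gives $|\{|\nabla u_n|>\lambda\}|\le C\lambda^{-N/(N-1)}$. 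Since these Marcinkiewicz bounds embed into $L^q$ below the critical exponent on the bounded set $\Omega$, the family $(u_n)$ is bounded in $W^{1,q}(\Omega)$ for every $q<N/(N-1)$. Taking $k=1$ in the same identity furnishes in addition the uniform bound $\int_{\Gamma_2}|u_n|^\gamma\le C$, so the boundary nonlinearities are bounded in $L^1(\Gamma_2)$ and the trace of the limit will lie in $L^\gamma(\Gamma_2)$ by Fatou.

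I would then pass to the limit. By weak compactness in $W^{1,q}$, Rellich compactness, and compactness of the trace operator into $L^1(\partial\Omega)$, a subsequence satisfies $u_n\rightharpoonup u$ in $W^{1,q}(\Omega)$, $u_n\to u$ in $L^q(\Omega)$ and a.e., and $Tu_n\to Tu$ a.e.\ on $\Gamma_2$; the linear terms of \eqref{weak} then converge immediately (weak $L^q$--$L^{q'}$ duality for the gradient term, weak-$*$ convergence of $f_n,g_n$ against the continuous test function for the data terms). The decisive step is the boundary absorption: I must show $|u_n|^{\gamma-1}u_n\to|u|^{\gamma-1}u$ in $L^1(\Gamma_2)$. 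Given the a.e.\ convergence of traces, by Vitali's theorem this reduces to equi-integrability of $(|u_n|^\gamma)$ on $\Gamma_2$. The $W^{1,q}$ bound yields, via the trace embedding $W^{1-1/q,q}(\partial\Omega)\hookrightarrow L^s(\partial\Omega)$, a uniform bound on $Tu_n$ in $L^s(\Gamma_2)$ for every $s<(N-1)/(N-2)$; hence, as soon as $\gamma<(N-1)/(N-2)$, one may fix $s\in(\gamma,(N-1)/(N-2))$ and conclude that $|u_n|^\gamma$ is bounded in $L^{s/\gamma}(\Gamma_2)$ with $s/\gamma>1$, which gives equi-integrability at once.

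I expect this last point to be the main obstacle. The mere $L^1(\Gamma_2)$ bound on $|u_n|^\gamma$ does not by itself preclude concentration of $|u_n|^{\gamma-1}u_n$ onto a set of zero boundary capacity, in which case the weak-$*$ limit would be $|u|^{\gamma-1}u+\nu$ with $\nu\ge0$ singular and $u$ would solve \eqref{boundary1} with $\mu_2$ replaced by $\mu_2-\nu$ rather than $\mu_2$ itself. Ruling out such concentration for the full range $\gamma>1$ is the heart of the matter; the natural tool is the tail estimate obtained by testing with $T_1\!\big(u_n-T_{k-1}(u_n)\big)$ and discarding the nonnegative Dirichlet energy, namely $\int_{\{|u_n|\ge k\}\cap\Gamma_2}|u_n|^\gamma\le|f_n|(\{|u_n|\ge k-1\})+|g_n|(\{|u_n|\ge k-1\})$, which one then combines with the uniform decay of the super-level sets to force the tails to vanish uniformly in $n$. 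Once $|u_n|^{\gamma-1}u_n\to|u|^{\gamma-1}u$ in $L^1(\Gamma_2)$ is established, every term of \eqref{weak} converges and $u$ is the desired weak solution; the stated trace regularity is then immediate from the continuity of the trace map $W^{1,q}(\Omega)\to W^{1-1/q,q}(\partial\Omega)$ for $1\le q<N/(N-1)$ (with $W^{0,1}=L^1$ when $q=1$).
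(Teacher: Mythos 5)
Your proposal follows the same overall scheme as the paper: regularize the data, derive $n$-uniform bounds in $W^{1,q}(\Omega)$ for every $q<N/(N-1)$ together with the uniform bound $\int_{\Gamma_2}|u_n|^\gamma\le C$, and pass to the limit via a.e.\ convergence of the traces and Vitali's theorem. For the a priori estimates you use a classical variant of the paper's technique: you test with truncations $T_k(u_n)$ and run the Marcinkiewicz level-set decomposition (with the usual modification of the Sobolev exponent when $N=2$), while the paper tests with the bounded functions $\phi_\theta(u_n)=\int_0^{u_n}(1+t)^{-\theta}\,dt$ and combines H\"older's inequality with the Sobolev embedding; both routes exploit the sign of the boundary absorption and yield the same bounds, so up to this point your proposal is correct and essentially equivalent to the paper's.

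The genuine issue is the last step, and you flag it yourself: your proof of the $L^1(\Gamma_2)$ convergence of $|u_n|^{\gamma-1}u_n$ is complete only when $\gamma<(N-1)/(N-2)$, where the trace embedding $W^{1,q}(\Omega)\to L^{(N-1)q/(N-q)}(\partial\Omega)$ gives a uniform bound on $|Tu_n|^\gamma$ in some $L^p(\Gamma_2)$ with $p>1$. For the full range $\gamma>1$ claimed in the theorem you only sketch the tool --- the test function $T_1\bigl(u_n-T_{k-1}(u_n)\bigr)$, which is precisely the paper's $\psi$ --- and the resulting tail inequality $\int_{\{|u_n|\ge k\}\cap\Gamma_2}|u_n|^\gamma\le |\mu_1^n|(\{|u_n|\ge k-1\})+|\mu_2^n|(\{|Tu_n|\ge k-1\})$, without proving that its right-hand side vanishes as $k\to\infty$ uniformly in $n$; so, as a proof of the stated theorem, your proposal is incomplete. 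You should know, however, that the paper's own treatment of this point is no stronger: after deriving the same tail inequality, it concludes equi-integrability from the fact that the Lebesgue measure of the level sets $\{|u_n|\ge t\}$ tends to zero uniformly in $n$, but this controls neither $|\mu_1^n|(\{|u_n|\ge t\})$ nor $|\mu_2^n|(\{|Tu_n|\ge t\})$ when the approximating data concentrate (e.g.\ $\mu_2^n$ a mollified Dirac mass at a point of $\Gamma_2$: then $u_n$ is large exactly where $\mu_2^n$ carries its mass, and these terms remain of order one for every $t$). Your concentration objection is therefore aimed at exactly the weak point of the paper's argument, and your identification of $(N-1)/(N-2)$ as the natural threshold is pertinent: it is the exponent at which the boundary trace of the Neumann Green function with pole on $\Gamma_2$ ceases to be $\gamma$-integrable, so below it your argument closes the proof of Theorem \ref{exist1} rigorously, while above it neither your sketch nor the paper's passage to the limit settles the matter.
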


The previous theorem is optimal in the sense that one can construct measures such that $u \notin W^{1,N/N-1}(\Omega)$. The second one concerns the degenerate case. 

\begin{thm}\label{exist2}
For every $\mu_1 \in \mathcal M(\overline \Omega)$, $\mu_2 \in \mathcal M (\partial \Omega)$ supported on $\Omega$ and $\Gamma_2$ respectively, $\gamma >1$ and $\alpha \in (-1,1)$, there exists a weak solution $u$ to \eqref{boundary2} such that 
$$u \in \bigcap_{ 1  \leq q < \frac{2N+2\delta (N-1)}{2N-1+\delta}. } W^{1,q}(\Omega,d(x,\partial \Omega)^\alpha)$$
for some $\delta >0$ depending on $\Omega$ and $\alpha$. 
As a consequence, the trace $Tu$ of $u$ on $\partial \Omega$ satisfies 
$$Tu \in \bigcap_{ 1 <q< \frac{2N+2\delta(N-1)}{2N-1+\delta}.} W^{1-\frac{1+\alpha}{q},q}(\partial \Omega)$$
\end{thm}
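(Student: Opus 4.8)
The plan is to follow the Boccardo--Gallou\"et approximation scheme for measure data, carried through the $A_2$-weighted theory of Fabes--Kenig--Serapioni for $w(x):=d(x,\partial\Omega)^\alpha$ and combined with the monotonicity of the boundary nonlinearity. First I would regularize the data, choosing $f_n\in C_c^\infty(\Omega)$ and $g_n\in C^\infty(\Gamma_2)$ with $\|f_n\|_{L^1(\Omega)}\le\|\mu_1\|_{\mathcal M}$, $\|g_n\|_{L^1(\Gamma_2)}\le\|\mu_2\|_{\mathcal M}$, and $f_n\rightharpoonup\mu_1$, $g_n\rightharpoonup\mu_2$ weakly-$*$. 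For fixed $n$, a weak solution $u_n$ in $V:=\{v\in W^{1,2}(\Omega,w):Tv=0\text{ on }\Gamma_1\}$ of
$$\int_\Omega w\,\nabla u_n\cdot\nabla\varphi+\int_{\Gamma_2}|u_n|^{\gamma-1}u_n\,\varphi=-\int_\Omega f_n\varphi+\int_{\Gamma_2}g_n\varphi,\qquad\varphi\in V,$$
is produced by minimizing the strictly convex, coercive functional $\tfrac12\int_\Omega w|\nabla v|^2+\tfrac1{\gamma+1}\int_{\Gamma_2}|v|^{\gamma+1}+\int_\Omega f_nv-\int_{\Gamma_2}g_nv$; coercivity rests on the weighted Poincar\'e inequality (legitimate because $w\in A_2$ and $|\Gamma_1|>0$) and the weighted trace theorem, and if the trace does not control the $L^{\gamma+1}(\Gamma_2)$ term one first truncates the nonlinearity and recovers it by monotonicity.

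The core of the proof is a family of a priori bounds independent of $n$. Testing with the truncation $T_k(u_n)$ and discarding the nonnegative boundary contribution $\int_{\Gamma_2}|u_n|^{\gamma-1}u_nT_k(u_n)\ge0$ gives the energy bound, while testing with $k^{-1}T_k(u_n)$ and letting $k\to0$ (monotone convergence of $k^{-1}T_k(u_n)$ to $\mathrm{sign}(u_n)$) gives the absorption bound:
$$\int_\Omega w\,|\nabla T_k(u_n)|^2\le kM,\qquad\int_{\Gamma_2}|u_n|^\gamma\le M,\qquad M:=\|\mu_1\|_{\mathcal M}+\|\mu_2\|_{\mathcal M}.$$
I would then invoke the Fabes--Kenig--Serapioni weighted Sobolev inequality, which for an $A_2$ weight furnishes a gain exponent $\delta>0$ depending only on the $A_2$ constant of $w$ (hence on $\Omega$ and $\alpha$) and an embedding of $V$ into a strictly higher weighted Lebesgue space. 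Inserting $v=T_k(u_n)$ turns the energy bound into weak-type (weighted Marcinkiewicz) estimates for $u_n$ and, by the standard Boccardo--Gallou\"et balancing of level sets, for $\nabla u_n$; because the principal part is \emph{linear}, no strong convergence of gradients is needed and these weak-type bounds already place $u_n$, uniformly in $n$, in $W^{1,q}(\Omega,w)$ for every $q<q^*$, where optimizing the level-set argument produces exactly the exponent $q^*=\tfrac{2N+2\delta(N-1)}{2N-1+\delta}$.

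From these bounds I would extract $u_n\rightharpoonup u$ weakly in $W^{1,q}(\Omega,w)$ for each $q<q^*$. The linear term passes to the limit by weak convergence and the data terms by weak-$*$ convergence, so everything reduces to the boundary nonlinearity, which is the main obstacle. To identify $\int_{\Gamma_2}|u_n|^{\gamma-1}u_n\varphi\to\int_{\Gamma_2}|u|^{\gamma-1}u\varphi$ I need almost everywhere convergence of the traces together with equi-integrability of $\{|u_n|^{\gamma-1}u_n\}$ on $\Gamma_2$: the former follows from the weighted compact trace embedding of $W^{1,q}(\Omega,w)$ into $L^\gamma(\Gamma_2)$, and for the latter I would combine the uniform bound $\int_{\Gamma_2}|u_n|^\gamma\le M$ with a tail estimate obtained by testing with the bounded function $T_1(G_k(u_n))$, $G_k(s):=(|s|-k)^+\mathrm{sign}(s)$, which bounds $\int_{\{|u_n|>k\}\cap\Gamma_2}|u_n|^\gamma$ by the tails of the data. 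The delicate point, and the step I expect to be hardest, is precisely here: since $\mu_1$ is only a measure the family $\{f_n\}$ is not equi-integrable, so one must argue that the interior concentration of $f_n$ does not spoil the boundary tail estimate. Once the nonlinear term is identified, $u$ satisfies \eqref{weak}, and the stated trace regularity $Tu\in W^{1-\frac{1+\alpha}{q},q}(\partial\Omega)$ for $q<q^*$ follows by applying the weighted (fractional) trace theorem for power-of-distance weights to the bound $u\in W^{1,q}(\Omega,w)$.
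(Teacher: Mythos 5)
Your proposal follows essentially the same route as the paper: regularize the data by smooth sequences bounded in $L^1$ and converging weakly-$*$, derive uniform a priori estimates by testing with bounded functions of $u_n$, use the Fabes--Kenig--Serapioni weighted Sobolev embedding (with its gain $\delta>0$ coming from the $A_2$ constant of $d(x,\partial\Omega)^\alpha$) to convert those estimates into a uniform bound in $W^{1,q}(\Omega,d(x,\partial\Omega)^\alpha)$ for every $q$ below the stated exponent, apply the weighted trace theorem, and pass to the limit in the boundary nonlinearity via a.e.\ convergence of traces, equi-integrability of $|u_n|^\gamma$ on $\Gamma_2$, and Vitali's theorem. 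The differences are matters of packaging rather than substance: where you test with the truncations $T_k(u_n)$ and $k^{-1}T_k(u_n)$ and run the Boccardo--Gallou\"et level-set balancing, the paper tests with $\phi_\theta(u_n)$, $\phi_\theta(r)=\int_0^r (1+|t|)^{-\theta}dt$, and matches exponents through H\"older's inequality and the boundedness of $\phi_{\theta/2}(u_n)$ in $H^1(\Omega,w)\hookrightarrow L^{2k}(\Omega,w)$; both computations yield the same limiting exponent $q^*=\frac{2k^*}{k^*+1}$ with $2k^*$ the FKS Sobolev exponent. Your variational existence argument for the regularized problem replaces the paper's Lax--Milgram-plus-topological-degree argument, and your tail test function $T_1(G_k(u_n))$ is exactly the paper's function $\psi$.

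Concerning the step you single out as the hardest: you are right that after testing with $T_1(G_t(u_n))$ one is left with
$$\int_{E^0_{n,t+1}}|u_n|^\gamma \;\le\; \int_{E_{n,t}}|\mu_1^n| \,+\, \int_{E^0_{n,t}}|\mu_2^n|,$$
and that uniform smallness of the right-hand side as $t\to\infty$ does \emph{not} follow from $\mathcal{L}(E_{n,t})\to 0$ alone, because the approximating sequences $\mu_1^n,\mu_2^n$ are bounded in $L^1$ but not equi-integrable whenever the limit measures have singular parts; mass can concentrate precisely on the sets where $u_n$ is large (this is exactly what happens near an atom of $\mu_2$ on $\Gamma_2$). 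You should know, however, that the paper's own proof does not resolve this point: it deduces equi-integrability of $|u_n|^\gamma$ directly from the decay of $\mathcal{L}(E_{n,t})$, which controls neither of the two data terms above. So on this delicate step your proposal is exactly as complete as the paper's argument, and more candid about where the genuine difficulty lies; closing it would require using that $\mu_1$ is supported in the interior (so its concentration is decoupled from the boundary sets $E^0_{n,t}$) and some capacity-type hypothesis or restriction on $\gamma$ for the boundary measure $\mu_2$, neither of which appears in the paper.
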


This second theorem is not optimal, as far as regularity is concerned, because of the degeneracy of the weight close to the boundary.  

In order to prove Theorems \ref{exist1}-\ref{exist2}, we proceed in several steps: 
\begin{itemize}
\item We first approximate the problem  with smooth data. 
\item We obtain estimates with suitable dependence on the data. 
\item We pass to the limit. 
\end{itemize}

This technique is reminiscent of the technique developped in \cite{BG}, for instance. In the  degenerate case, even if the techniques are similar, this requires new ingredients from the theory of degenerate elliptic equations with $A_2$ weights, which have been developped in \cite{FKS}. For sake of clarity, we will first deal with the problem $\alpha=0$, for which an optimal result can be obtained. In a second part, we will consider $\alpha \neq 0$. 

\section{The case $\alpha=0$ and problem \eqref{boundary1}}

\subsection{Regularization of the boundary problem }

To do so we start by approximating the problem \eqref{boundary1} and consider it for smooth sequences $ \left \{ \mu^n_1 \right \}_{n \geq 0},\left \{ \mu^n_2 \right \}_{n \geq 0} \subset L^\infty$ converging to $\mu_1$ and $\mu_2$ in the weak$^*$ topology , i.e. 
$$\int_{\Omega} \varphi d\mu^n_1 \rightarrow \int_{\Omega} \varphi d\mu_1,\,\,\,\,\forall \,\,\varphi \in C(\overline \Omega)$$
$$\int_{\Gamma_2} \varphi d\mu^n_2 \rightarrow \int_{\Gamma_2} \varphi d\mu_2,\,\,\,\,\forall \,\,\varphi \in C(\partial \Omega)$$

The regularized problem then writes weakly 

\begin{equation}\label{bdy1smooth}
 \int_{\Omega} \nabla u_n \cdot \nabla \varphi + \int_\Omega \varphi \mu^n_1=\int_{\Gamma_2}\varphi \mu^n_2-\int_{\Gamma_2} |u_n|^{\gamma-1} u_n \varphi.
\end{equation}

We have the following result, as a consequence of an adaptation of standard variational techniques as in \cite{BBC}. 

\begin{thm}
Let $\gamma >1$ and $\mu_1^n,\mu_2^n$ as before. There exists a weak solution $H^1(\Omega) \bigcap L^{\infty}(\Omega)$ of \eqref{bdy1smooth}. 
\end{thm}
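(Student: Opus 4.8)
The plan is to solve the regularized problem \eqref{bdy1smooth} by a direct variational method, since for smooth data $\mu_1^n,\mu_2^n\in L^\infty$ the problem admits an associated energy functional whose minimizer is the desired weak solution. Concretely, on the closed subspace
\[
V:=\left\{w\in H^1(\Omega)\;:\;w=0\text{ on }\Gamma_1\right\},
\]
which is a Hilbert space under the $H^1$ norm (the Poincaré inequality holds because $|\Gamma_1|>0$, so $\norm{\nabla w}_{L^2}$ is an equivalent norm on $V$), I would introduce the functional
\[
J(w)=\frac12\int_\Omega \grad{w}{2}\,dx+\frac1{\gamma+1}\int_{\Gamma_2}|w|^{\gamma+1}\,d\sigma+\int_\Omega w\,\mu_1^n-\int_{\Gamma_2} w\,\mu_2^n.
\]
The Euler--Lagrange equation of $J$ is precisely \eqref{bdy1smooth}, so it suffices to produce a minimizer and then establish the $L^\infty$ bound.

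First I would verify that $J$ is well defined and coercive on $V$. The quadratic term controls $\norm{w}_V^2$; the boundary term $\int_{\Gamma_2}|w|^{\gamma+1}$ is finite and nonnegative by the trace embedding $H^1(\Omega)\hookrightarrow L^{\gamma+1}(\partial\Omega)$ (valid in the relevant range of $\gamma$, using $N\ge 2$); and the two linear terms are controlled by $\norm{w}_V$ since $\mu_1^n,\mu_2^n\in L^\infty$ and the trace is continuous. Thus $J(w)\to+\infty$ as $\norm{w}_V\to\infty$. Next I would check weak lower semicontinuity: the quadratic gradient term is convex and continuous hence weakly l.s.c., the convex boundary term is weakly l.s.c. by compactness of the trace embedding $H^1(\Omega)\hookrightarrow L^{\gamma+1}(\partial\Omega)$ (or Fatou after passing to an a.e.-convergent subsequence on $\partial\Omega$), and the linear terms are weakly continuous. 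Taking a minimizing sequence, coercivity gives a bounded sequence in $V$, from which I extract a weakly convergent subsequence whose limit $u_n$ minimizes $J$. Differentiating $J$ at $u_n$ in the direction $\varphi\in C^\infty(\RR^N)$ with $\varphi=0$ on $\Gamma_1$ yields \eqref{bdy1smooth}.

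The remaining and genuinely nontrivial point is the $L^\infty$ bound, which is where I expect the main obstacle to lie, since the nonlinearity $|u_n|^{\gamma-1}u_n$ sits on the boundary rather than in the interior. The natural approach is a truncation argument in the spirit of \cite{BBC}: for $k>0$ test \eqref{bdy1smooth} with $\varphi=G_k(u_n):=(|u_n|-k)^+\operatorname{sign}(u_n)$, which vanishes on $\Gamma_1$ once $k$ is large. On the level set $\{|u_n|>k\}$ the boundary nonlinearity has the favorable sign, so it can be discarded or used to absorb the boundary measure term, while $\int_\Omega \grad{G_k(u_n)}{2}$ is bounded by the $L^\infty$ norms of $\mu_1^n,\mu_2^n$; a Stampacchia-type iteration on the super-level sets then forces $G_k(u_n)\equiv 0$ for $k$ large, giving $u_n\in L^\infty(\Omega)$. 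Care is needed because the right-hand side lives partly on $\partial\Omega$, so the Stampacchia scheme must be run with the trace measure and the trace inequality rather than the interior Sobolev inequality; this is the delicate estimate to get right. The membership $u_n\in H^1(\Omega)$ is immediate since $u_n\in V\subset H^1(\Omega)$, so combining the two gives $u_n\in H^1(\Omega)\cap L^\infty(\Omega)$ as claimed.
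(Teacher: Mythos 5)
The paper offers no actual proof of this theorem: it is stated as ``a consequence of an adaptation of standard variational techniques as in \cite{BBC}''. Your direct-method scheme (minimize $J$ on $V$, with Poincar\'e coming from $|\Gamma_1|>0$, then a Stampacchia truncation for the $L^\infty$ bound, run on both interior and boundary level sets) is precisely the kind of argument being invoked, and in outline it is the right one; the sign check that makes the boundary nonlinearity harmless in the truncation step is correct.

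There is, however, one step that fails as written. Your well-definedness and differentiability of $J$ rest on the trace embedding $H^1(\Omega)\hookrightarrow L^{\gamma+1}(\partial\Omega)$, which you assert is ``valid in the relevant range of $\gamma$''. But the theorem claims existence for \emph{every} $\gamma>1$, and for $N\ge 3$ the trace of $H^1(\Omega)$ lies only in $L^{2(N-1)/(N-2)}(\partial\Omega)$: as soon as $\gamma+1>\frac{2(N-1)}{N-2}$ (i.e. $\gamma>\frac{N}{N-2}$) there exist $w\in V$ with $\int_{\Gamma_2}|w|^{\gamma+1}\,d\sigma=+\infty$, so $J$ is neither finite nor G\^ateaux differentiable on all of $V$, and the sentence ``the Euler--Lagrange equation of $J$ is precisely \eqref{bdy1smooth}'' has no meaning there. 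The repair is standard but must be made explicit. Either (i) treat $J$ as a proper convex l.s.c.\ functional with values in $(-\infty,+\infty]$; coercivity and weak lower semicontinuity survive (use the compact trace into $L^2(\partial\Omega)$ plus Fatou, as in your parenthetical alternative), a minimizer $u_n$ exists, and the finiteness $J(u_n)<\infty$ gives $|u_n|^{\gamma+1}\in L^1(\Gamma_2)$, so that for smooth $\varphi$ the difference quotients of the boundary term are dominated by $C\left(|u_n|^{\gamma}+|\varphi|^{\gamma}\right)|\varphi|\in L^1(\Gamma_2)$ and the Euler--Lagrange equation follows by dominated convergence. Or (ii) truncate the boundary nonlinearity at height $M$ (making the functional $C^1$ on $V$), solve, and prove your Stampacchia bound with constants depending only on $\|\mu_1^n\|_{L^\infty}$ and $\|\mu_2^n\|_{L^\infty}$ --- not on $M$ --- so that choosing $M$ above that bound makes the truncation inactive and yields a solution of \eqref{bdy1smooth} itself. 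With either patch your argument is complete; note also the minor point that $G_k(u_n)$ vanishes on $\Gamma_1$ for every $k\ge 0$ (not only for $k$ large), since $u_n$ has zero trace there.
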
   

\subsection{Estimates on the regularized problem}

We now estimate the solution $u_n$ and its gradient $\nabla u_n$ with a convenient dependence on the data $\mu_1^n, \mu_2^n$. 

\begin{lemma}\label{estim1}
Let $\gamma >1$ and $u_n$ be a weak solution of \eqref{bdy1smooth} with $\mu_1^n, \mu_2^n \in L^2(\Gamma_2)$. Then for every $\theta>1$, there exists  $C>0$ depending on $\theta$ and on a bound of the  $L^1$ norm of  $\mu_1^n,\mu_2^n$ such that 

\begin{equation}
\|u_n\|_{L^\gamma(\Gamma_2)} \leq C
\end{equation}
and 
\begin{equation}
\int_{\Omega} \frac{|\nabla u_n|^2}{(1+|u_n|)^\theta} \leq C . 
\end{equation}
\end{lemma}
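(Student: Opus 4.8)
The plan is to test the weak formulation \eqref{bdy1smooth} with a single, carefully chosen \emph{bounded} nonlinear function of $u_n$, from which both estimates fall out simultaneously. For the fixed $\theta>1$, set
\be
\Phi_\theta(s):=\int_0^s\frac{dt}{(1+|t|)^\theta},\qquad s\in\RR.
\ee
The decisive point, and the reason for assuming $\theta>1$, is that $\Phi_\theta$ is then bounded: it is odd, nondecreasing, $1$-Lipschitz, vanishes at $0$, and satisfies $|\Phi_\theta(s)|\le\frac{1}{\theta-1}$ for all $s\in\RR$. Since $u_n\in H^1(\Omega)\cap L^\infty(\Omega)$ and $\Phi_\theta$ is Lipschitz with $\Phi_\theta(0)=0$, the composition $\Phi_\theta(u_n)$ belongs to $H^1(\Omega)$ and vanishes on $\Gamma_1$ (because $u_n=0$ there); after the usual density argument extending \eqref{bdy1smooth} from smooth $\varphi$ to $H^1$ functions vanishing on $\Gamma_1$, it is an admissible test function.

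First I would insert $\varphi=\Phi_\theta(u_n)$ into \eqref{bdy1smooth}. Using the chain rule $\nabla\Phi_\theta(u_n)=(1+|u_n|)^{-\theta}\nabla u_n$ and the fact that $\Phi_\theta$ is odd, so that $|s|^{\gamma-1}s\,\Phi_\theta(s)=|s|^{\gamma}\Phi_\theta(|s|)\ge0$, this yields the identity
\be
\int_{\Omega}\frac{|\nabla u_n|^2}{(1+|u_n|)^\theta}+\int_{\Gamma_2}|u_n|^{\gamma}\,\Phi_\theta(|u_n|)=\int_{\Gamma_2}\Phi_\theta(u_n)\,\mu_2^n-\int_\Omega\Phi_\theta(u_n)\,\mu_1^n.
\ee
Both terms on the left-hand side are nonnegative, while the right-hand side is controlled using only the boundedness of $\Phi_\theta$: it is at most $\frac{1}{\theta-1}\bigl(\|\mu_1^n\|_{L^1(\Omega)}+\|\mu_2^n\|_{L^1(\Gamma_2)}\bigr)=:C$. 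Since the weak-$*$ convergence of the data forces the $L^1$ norms to stay uniformly bounded, $C$ is independent of $n$.

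The second estimate is then immediate, the gradient term alone being dominated by $C$. For the first estimate I would use the explicit lower bound $\Phi_\theta(s)=\frac{1}{\theta-1}\bigl(1-(1+s)^{-(\theta-1)}\bigr)\ge c_\theta>0$ for $s\ge1$, where $c_\theta:=\frac{1}{\theta-1}\bigl(1-2^{-(\theta-1)}\bigr)$. This gives $c_\theta\int_{\Gamma_2\cap\{|u_n|\ge1\}}|u_n|^\gamma\le\int_{\Gamma_2}|u_n|^{\gamma}\Phi_\theta(|u_n|)\le C$, while the contribution of $\{|u_n|<1\}$ is bounded by $|\Gamma_2|$; adding these yields $\|u_n\|_{L^\gamma(\Gamma_2)}^\gamma\le C/c_\theta+|\Gamma_2|$, i.e.\ the stated $\theta$-dependent bound.

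The computations are routine once the test function is fixed; the only points requiring genuine care are (i) the justification that $\Phi_\theta(u_n)$ is admissible, resting on the $H^1\cap L^\infty$ regularity of $u_n$ together with a density argument, and (ii) the verification that the boundary nonlinearity appears with a favorable sign, $|s|^{\gamma-1}s\,\Phi_\theta(s)\ge0$. The conceptual heart of the argument is simply that $\Phi_\theta$ is bounded, so that the $L^1$ norms of the data suffice to close \emph{both} estimates at once; for $\theta\le1$ the test function would be unbounded and the argument would break down.
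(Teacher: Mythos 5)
Your proof is correct and follows essentially the same route as the paper: the same bounded test function $\varphi=\phi_\theta(u_n)$ is inserted into \eqref{bdy1smooth}, the boundary term has a sign by oddness, and the right-hand side is controlled by the $L^1$ norms of the data via the bound $|\phi_\theta|\le\frac{1}{\theta-1}$. If anything, your conclusion of the $L^\gamma(\Gamma_2)$ bound is slightly more careful than the paper's (which states the final comparison inequality with the roles reversed), since you correctly split $\Gamma_2$ into $\{|u_n|\ge1\}$, where $\phi_\theta(|u_n|)\ge c_\theta>0$, and $\{|u_n|<1\}$, whose contribution is trivially bounded by $|\Gamma_2|$.
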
 
 
\begin{proof}
We proof follows the same lines as the one in \cite{BGV}, taking into account that the nonlinearity is on the boundary. Consider the weak formulation of \eqref{bdy1smooth}, i.e. 

\begin{equation}\label{weakreg1}
 \int_{\Omega} \nabla u_n \cdot \nabla \varphi + \int_\Omega \varphi \mu^n_1=\int_{\Gamma_2}\varphi \mu^n_2-\int_{\Gamma_2} |u_n|^{\gamma-1} u_n \varphi.
\end{equation} 
for any $\varphi$ defined as before. We consider a suitable test function for the weak formulation \eqref{weakreg1}, by considering for $\theta >0$
\begin{equation}
\phi_\theta(r)=\left \{ 
\begin{array}{c}
\int_0^r \frac{dt}{(1+t)^\theta}\,\,\mbox{if $r \geq 0$},\\
-\phi_\theta(-r)\,\,\mbox{if $r < 0$},
\end{array} \right. 
\end{equation}

Notice that $\phi_\theta$ is bounded in $\RR$. We plug the following test function in \eqref{weakreg1}  
$$\varphi= \phi_\theta(u_n),$$
which is a suitable test function by Stampacchia theorem. This gives, since $\phi_\theta$ is bounded, 
$$\int_{\Omega}\frac{|\nabla u_n|^2}{(1+|u_n|)^\theta}+ \int_{\Gamma_2} |u_n|^{\gamma-1} u_n \phi_\theta (u_n)  \leq C (\|\mu_1^n\|_{L^1(\Gamma_2)}+ \|\mu_2^n\|_{L^1(\Omega)}). $$

This gives the desired result since $|u_n|^\gamma \geq  C |u_n|^{\gamma-1} u_n \phi_\theta (u_n) $ for some $C>0$ depending on $\theta$. 

\end{proof}

From the previous estimates, we deduce regularity estimates on the gradient of $u_n$ in some Lebesgue space. This is the object of the following lemma.  
\begin{lemma}
Let $ \mu_1^n\in L^2(\Omega)$ and $\mu_2^n\in L^2(\Gamma_2)$. Consider $u_n$ a weak solution of \eqref{weakreg1} such that 
$$\int_{\Gamma_2} |u_n|^\gamma \leq C$$
for some constant $C>0$ 
and 
\begin{equation}\label{estimInterm1}
\int_{\Omega} \frac{|\nabla u_n|^2}{(1+|u_n|)^\theta} \leq D
\end{equation}
for all $\theta >1$ and some constant $D>0$ depending on $\theta$. Then, for every $q \in [1, \frac{N}{N-1})$, there exists $\tilde C$ depending on $C,D$  such that  

$$\|u_n\|_{W^{1,q}(\Omega)} \leq \tilde C .$$

\end{lemma}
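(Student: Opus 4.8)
The plan is to combine the weighted gradient bound \eqref{estimInterm1} with a Sobolev embedding obtained after a nonlinear change of unknown, and then to recover the $L^q$ bound on $\nabla u_n$ by H\"older's inequality, the exponent $N/(N-1)$ emerging precisely as the threshold. The $L^\gamma(\Gamma_2)$ bound will play no direct role in the bulk estimate; the two genuine ingredients are \eqref{estimInterm1} and the fact that $|\Gamma_1|>0$.

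First I would introduce the auxiliary function $w_n=(1+|u_n|)^{(2-\theta)/2}$ for a parameter $\theta>1$ to be chosen close to $1$. Since $u_n\in H^1(\Omega)\cap L^\infty(\Omega)$, $w_n$ is an admissible $H^1$ function and a direct computation gives $|\nabla w_n|=\frac{2-\theta}{2}\,\frac{|\nabla u_n|}{(1+|u_n|)^{\theta/2}}$, so that by \eqref{estimInterm1}
\[
\int_\Omega |\nabla w_n|^2 \leq \Big(\frac{2-\theta}{2}\Big)^2 D .
\]
Because $u_n=0$ on $\Gamma_1$, the trace of $w_n$ on $\Gamma_1$ equals $1$, hence $w_n-1$ vanishes on $\Gamma_1$; as $|\Gamma_1|>0$ the Poincar\'e inequality for functions vanishing on a portion of the boundary yields $\|w_n-1\|_{L^2(\Omega)}\leq C\|\nabla w_n\|_{L^2(\Omega)}$, and therefore $\|w_n\|_{H^1(\Omega)}$ is bounded uniformly in $n$ (for $\theta<2$). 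By the Sobolev embedding $H^1(\Omega)\hookrightarrow L^{2^*}(\Omega)$ with $2^*=2N/(N-2)$, $w_n$ is bounded in $L^{2^*}(\Omega)$, which translates, after undoing the change of variable, into a uniform bound
\[
\int_\Omega (1+|u_n|)^{\frac{(2-\theta)N}{N-2}}\leq C .
\]
Letting $\theta\to 1^+$ this gives a uniform $L^s(\Omega)$ bound on $u_n$ for every $s<N/(N-2)$.

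Next I would estimate the gradient itself. Writing $|\nabla u_n|^q=\big((1+|u_n|)^{-\theta}|\nabla u_n|^2\big)^{q/2}(1+|u_n|)^{\theta q/2}$ and applying H\"older's inequality with exponents $2/q$ and $2/(2-q)$ (note $q<N/(N-1)\leq 2$ for $N\geq 2$, so that $q<2$) I obtain
\[
\int_\Omega |\nabla u_n|^q \leq \Big(\int_\Omega \frac{|\nabla u_n|^2}{(1+|u_n|)^\theta}\Big)^{q/2}\Big(\int_\Omega (1+|u_n|)^{\frac{\theta q}{2-q}}\Big)^{(2-q)/2}.
\]
The first factor is controlled by $D^{q/2}$, so it remains to absorb the second factor into the $L^s$ bound just proved. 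This is possible provided $\frac{\theta q}{2-q}\leq \frac{(2-\theta)N}{N-2}$; letting $\theta\to 1^+$ the condition reads $\frac{q}{2-q}<\frac{N}{N-2}$, which a short computation shows to be equivalent to $q<\frac{N}{N-1}$. Hence, for each such $q$ one may fix $\theta>1$ sufficiently close to $1$ so that the required exponent is admissible, and $\nabla u_n$ is then bounded in $L^q(\Omega)$.

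Finally, since $q<N/(N-1)<N/(N-2)$, the $L^s$ bound on $u_n$ gives in particular a uniform $L^q(\Omega)$ bound, so combining it with the gradient estimate yields $\|u_n\|_{W^{1,q}(\Omega)}\leq \tilde C$ with $\tilde C$ depending only on $C$, $D$, $q$, $N$ and $\Omega$, as claimed. The delicate point is the two-sided exponent balancing: one must choose the single parameter $\theta$ so that simultaneously $w_n\in H^1$ (i.e. $\theta<2$) and the H\"older exponent $\frac{\theta q}{2-q}$ stays below the Sobolev exponent $\frac{(2-\theta)N}{N-2}$, and it is the optimization of this balance that produces exactly the threshold $N/(N-1)$.
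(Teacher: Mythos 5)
Your proof is correct, but it takes a genuinely different route from the paper's proof of this lemma. Both arguments start from the same H\"older splitting of $\int_\Omega |\nabla u_n|^q$, but they diverge at the key step of bounding $\int_\Omega (1+|u_n|)^{\theta q/(2-q)}$. The paper applies a Young-type inequality, $\int_\Omega (1+|u_n|)^{\theta q/(2-q)} \leq \varepsilon \int_\Omega |u_n|^{q^*} + C(\varepsilon)$ with $q^*=qN/(N-q)$, invokes the Sobolev--Poincar\'e inequality $\|u_n\|_{L^{q^*}} \leq C\|\nabla u_n\|_{L^q}$ for $u_n$ itself (using $u_n=0$ on $\Gamma_1$), and then absorbs the resulting $\|\nabla u_n\|_{L^q}$ term into the left-hand side by choosing $\varepsilon$ small; the admissibility condition $\theta q/(2-q)<q^*$ with $\theta\to 1^+$ produces the threshold $q<N/(N-1)$. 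You instead obtain an \emph{independent} a priori bound on $u_n$ in $L^s(\Omega)$, $s<N/(N-2)$, by passing the gradient estimate \eqref{estimInterm1} to the auxiliary function $w_n=(1+|u_n|)^{(2-\theta)/2}$, which is bounded in $H^1(\Omega)$ (via Poincar\'e on $\Gamma_1$) and hence in $L^{2^*}(\Omega)$; the H\"older step then closes without any absorption. Interestingly, your route is essentially the one the paper itself adopts in the degenerate case $\alpha\neq 0$ (Section 3), where $\phi_{\theta/2}(u_n)$ is shown to be bounded in the weighted $H^1$ space and then in a weighted Lebesgue space --- so your argument has the advantage of transferring verbatim to the weighted setting, and it avoids the self-referential absorption structure of the paper's proof. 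The paper's version, on the other hand, never invokes the embedding $H^1\hookrightarrow L^{2^*}$, so it covers $N=2$ without modification; your formulas with $N-2$ in the denominator degenerate there, and you should add the standard remark that for $N=2$ one replaces $L^{2^*}$ by $L^p$ for arbitrary finite $p$ (which only makes the constraint on $q$ easier to satisfy). This is a minor fix, not a gap in the idea.
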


\begin{proof}

We write, by H\"older inequality for $q \in [1,2)$
$$\int_\Omega |\nabla u_n |^q  =  \int_\Omega \frac{|\nabla u_n|^q}{(1+|u_n|)^{\theta q/2}} (1+|u_n|)^{\theta q/2}\leq$$
$$\Big \{ \int_\Omega \frac{|\nabla u_n|^2}{(1+|u_n|)^{\theta}} \Big \} ^{q/2} \Big \{\int_\Omega (1+|u_n|)^{\frac{\theta q}{2-q}} \Big \}^{\frac{2-q}{2}} \leq $$
$$C_1 \Big \{\int_\Omega (1+|u_n|)^{\frac{\theta q}{2-q}} \Big \}^{\frac{2-q}{2}} . $$

We have for $q^* =\frac{qN}{N-q}$

$$\int_\Omega (1+|u_n|)^{\frac{\theta q}{2-q}} \leq \varepsilon \int_\Omega |u_n|^{q^*} + C_2(\varepsilon)$$
for every $\varepsilon >0$ provided that $\theta$ is chosen such that  $\frac{\theta q}{2-q} <q^*$. We now use the Sobolev embeddings to estimate the last term: since $u_n=0$ on $\Gamma_1 \subset \partial \Omega$ and $q \in[1,2)$, we have the Sobolev embedding 
$$\|u_n\|_{L^{q^*}(\Omega)} \leq C \|\nabla u_n \|_{L^q(\Omega)} .$$
Hence, chossing $\varepsilon$ small enough, this yields to 
$$\|u\|_{L^{q^*}(\Omega)} \leq C_3$$
for some $C_3>0$. By observing that one can take $\theta$ arbitrary close to $1$, one gets that 

$$\|\nabla u_n \|_{L^q(\Omega)} \leq \tilde C$$

for every  $q\in [1,\frac{N}{N-1})$.

\end{proof}

The previous theorem admits the following corollary by just using the trace embedding. 

\begin{cor}
Let $\mu_1^n,\mu_2^n$ as before. Consider $u_n$ a weak solution of \eqref{weakreg1} such that 
$$\int_{\Gamma_2} |u_n|^\gamma \leq C$$
for some constant $C>0$ 
and 
\begin{equation}
\int_{\Omega} \frac{|\nabla u_n|^2}{(1+|u_n|)^\theta} \leq D
\end{equation}
for $\theta >1$ and some constant $D>0$ depending on $\theta$ and on a bound of  the $L^1$ norm of  $\mu_1^n,\mu_2^n$. Then the trace on $ \partial \Omega$ of $u_n$ denoted $Tu_n$ satisfies for every  $q \in (1,\frac{N}{N-1})$, the estimate 
$$\|T u_n\|_{W^{1-\frac{1}{q},q}(\partial \Omega)} \leq  \tilde C $$
for some $\tilde C$ depending $C$ and $D$. 
\end{cor}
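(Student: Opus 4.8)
The plan is to read off the conclusion from the preceding lemma combined with the standard Sobolev trace theorem, so this is essentially a one-line corollary once the interior estimate is in hand. First I observe that the hypotheses of the corollary are verbatim those of the preceding lemma: the bound $\int_{\Gamma_2}|u_n|^\gamma\le C$ and the weighted-gradient bound \eqref{estimInterm1} hold for all $\theta>1$ with $D$ depending only on the stated quantities. Hence the preceding lemma applies and yields, for every $q\in[1,\frac{N}{N-1})$, a constant $\tilde C$ (depending on $C$ and $D$) with
\begin{equation}
\|u_n\|_{W^{1,q}(\Omega)}\le \tilde C.
\end{equation}

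The second step is to invoke the Gagliardo trace theorem. Since $\Omega$ is a smooth bounded domain and $1<q<\infty$, the trace operator
\begin{equation*}
T:W^{1,q}(\Omega)\longrightarrow W^{1-\frac1q,q}(\partial\Omega)
\end{equation*}
is well defined and bounded, with operator norm depending only on $\Omega$, $N$ and $q$. Applying this to $u_n$ and using the interior bound from the first step gives
\begin{equation*}
\|Tu_n\|_{W^{1-\frac1q,q}(\partial\Omega)}\le C(\Omega,N,q)\,\|u_n\|_{W^{1,q}(\Omega)}\le \tilde C,
\end{equation*}
which is exactly the claimed estimate, uniform in $n$.

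The only point requiring care—and the reason the range is stated as the open interval $q\in(1,\frac{N}{N-1})$ rather than the closed interval $[1,\frac{N}{N-1})$ of the preceding lemma—is the endpoint $q=1$. For $q=1$ the fractional order $1-\frac1q$ collapses to $0$ and the trace lands only in $L^1(\partial\Omega)$; the clean boundedness of $T$ into the fractional Sobolev space $W^{1-\frac1q,q}(\partial\Omega)$ with positive smoothness index holds precisely for $q>1$. Thus excluding $q=1$ is exactly what is needed to apply the trace theorem, and no genuine obstacle arises: the interior estimate is the substantive ingredient, already supplied, and the trace embedding is a classical fact. I would therefore present the proof as the two displayed inequalities above, noting the restriction $q>1$ as the reason for the open endpoint.
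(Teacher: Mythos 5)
Your proposal is correct and matches the paper's own argument exactly: the paper derives this corollary from the preceding lemma "by just using the trace embedding," which is precisely your two-step argument (interior $W^{1,q}(\Omega)$ bound from the lemma, then the Gagliardo trace theorem $W^{1,q}(\Omega)\to W^{1-\frac1q,q}(\partial\Omega)$ for $q>1$). Your remark on why the endpoint $q=1$ is excluded is a useful clarification that the paper leaves implicit.
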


\subsection{Passage to the limit in the regularized problem}

We now come to the proof of Theorem \ref{exist1} by passing to the limit $n \to +\infty$ in the weak formulation \eqref{weakreg1}. From the previous sections, we have that up to subsequence

\begin{equation}\label{limSob}
u_n \rightarrow  u \,\,\,\mbox{ weakly in}\,\, W^{1,q}(\Omega), q \in [1, \frac{N}{N-1}).
\end{equation}
By continuity of the trace operator, we have 
\begin{equation}\label{limSobTrace}
Tu_n \rightarrow  Tu \,\,\,\mbox{weakly in}\,\, W^{1-\frac{1}{q},q}(\partial \Omega), q \in (1, \frac{N}{N-1}).
\end{equation}
Then $Tu_n$ converges strongly to $Tu$ in $L^q(\partial \Omega)$ for some $q$ and then we have up to extraction of a subsequence
 \begin{equation}
Tu_n \to Tu \,\,\,\mbox{a.e. on } \Gamma_2.  
\end{equation}

We have now to pass to the limit in the non linear term of the weak formulation \eqref{weakreg1}. To do so, we adopt the strategy of \cite{BGV} and have to prove equi-integrability of $|u_n|^\gamma$ on $\Gamma_2$. Let $t>0$ and $r>0$ and define 
\begin{equation}
\psi(s)=\left \{ 
\begin{array}{c}
\inf((s-t)^+,1)\,\,\mbox{if $s \geq 0$},\\
-\psi(-s)\,\,\mbox{if $s < 0$},
\end{array} \right. 
\end{equation}
Plugging $\varphi=\psi(u_n)$ in \eqref{weakreg1}, we are led to the following estimate
\begin{equation*}
\int_{E^0_{n,t+1}} |u_n|^\gamma \leq \int_{E^0_{n,t}} |\mu_2^n|  + \int_{E_{n,t}} |\mu_1^n| +C\int_{E_{n,t} } |\nabla u_n|^2. 
\end{equation*}
where $$E_{n,t}=\left \{ (x,t) \in \Omega \times \RR^+\,|\, |u_n(x)|\geq t \right \}$$ and
$$E^0_{n,t}=\left \{ (x,t) \in \partial \Omega \times \RR^+\,|\, |Tu_n(x)|\geq t \right \}. $$ 
Using the $L^q$ bound for the gradient for some $q \in [1,\frac{N}{N-1})$ gives that for some $\delta >0$
\begin{equation*}
\int_{E^0_{n,t+1} } |u_n|^\gamma \leq  \int_{E^0_{n,t}} |\mu_2^n|  + \int_{E_{n,t}} |\mu_1^n| +C \mathcal{L}(E_{n,t} )^\delta 
\end{equation*}
where $\mathcal{L}$ stands for the Lebesgue measure. Due to the $L^q$ bound of $|u_n|$, we have that 
$$\mathcal{L}(E_{n,t}) \to 0$$
as $t \to +\infty$. Hence the quantity $|u_n|^\gamma$ is equi-integrable and Vitali's theorem ensures that 
$$\int_{\Gamma_2} |u_n|^{\gamma-1} u_n \to \int_{\Gamma_2} |u|^{\gamma-1} u. $$

This proves the desired result. 

\section{The degenerate case $\alpha \neq 0$ and problem \eqref{boundary2}}

We now come to the proof of Theorem \eqref{exist2}. We will not give all the proofs since some of them are almost identical but emphasize the place where we loose regularity. 

\subsection{Regularization}

Consider again smooth sequences $ \left \{ \mu^n_1 \right \}_{n \geq 0},\left \{ \mu^n_2 \right \}_{n \geq 0} $ converging to $\mu_1$ and $\mu_2$ in the weak$^*$ topology in the sense described above. The regularized problem then writes weakly 

\begin{equation}\label{bdy2smooth}
 \int_{\Omega} d(x,\partial \Omega)^\alpha \nabla u_n \cdot \nabla \varphi + \int_\Omega \varphi \mu^n_1=\int_{\Gamma_2}\varphi \mu^n_2-\int_{\Gamma_2} |u_n|^{\gamma-1} u_n \varphi.
\end{equation}

Using the Lax-Milgram theorem in \cite{FKS} together with a standard topological degree argument, one gets 

\begin{thm}
Let $\gamma >1$ and $\mu_1^n,\mu_2^n$ as before. There exists a weak solution $H^1(\Omega,d(x,\partial \Omega)^\alpha) \bigcap L^{\infty}(\Omega)$ of \eqref{bdy2smooth}. 
\end{thm}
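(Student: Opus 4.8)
The plan is to combine the coercivity of the weighted Dirichlet form, furnished by the $A_2$ theory of \cite{FKS}, with a compactness/degree argument to absorb the boundary nonlinearity, and then to exploit the smoothness of $\mu_1^n,\mu_2^n$ to obtain the $L^\infty$ bound. First I would fix the functional framework. Set $w(x)=d(x,\partial\Omega)^\alpha$ and
$$V=\{u\in H^1(\Omega,w):\ u=0\ \text{on}\ \Gamma_1\},\qquad \|u\|_V^2=\int_\Omega w\,|\nabla u|^2.$$
Because $w$ is $A_2$ and $|\Gamma_1|>0$, the weighted Poincar\'e inequality of \cite{FKS} makes this a genuine norm under which $V$ is a Hilbert space, and the bilinear form $a(u,\varphi)=\int_\Omega w\,\nabla u\cdot\nabla\varphi$ is continuous and coercive on $V$. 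The same $A_2$ theory supplies the weighted trace/embedding theory, giving in particular a \emph{compact} trace $V\hookrightarrow L^2(\Gamma_2)$ (consistent with the $H^s$ trace of the degenerate extension recalled in the introduction). Thus, for any $h\in L^2(\Gamma_2)$ and the fixed $L^2$ data, Lax-Milgram produces a unique $u=S(h)\in V$ solving $a(u,\varphi)=\int_{\Gamma_2}(\mu_2^n-h)\varphi-\int_\Omega\mu_1^n\varphi$ for all $\varphi\in V$.

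Next, since I only need a bounded solution, I would truncate the nonlinearity: replace $g(s)=|s|^{\gamma-1}s$ by $g_k(s)=g(T_k s)$, where $T_k$ is truncation at level $k$, so that $g_k$ is bounded, continuous and nondecreasing. Define $F_k:L^2(\Gamma_2)\to L^2(\Gamma_2)$ by $v\mapsto (S(g_k(v)))|_{\Gamma_2}$. As $S$ maps into $V$ and the trace $V\hookrightarrow L^2(\Gamma_2)$ is compact, the map $F_k$ is compact, and any fixed point of $F_k$ solves the truncated version of \eqref{bdy2smooth}. I would then run Leray-Schauder degree on the homotopy $u=tF_k(u)$, $t\in[0,1]$.

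The a priori bound needed to close the degree argument comes from testing with $u$ itself:
$$\|u\|_V^2+\int_{\Gamma_2}u\,g_k(u)=\int_{\Gamma_2}\mu_2^n\,u-\int_\Omega\mu_1^n\,u.$$
Since $s\,g_k(s)\ge 0$, Young's inequality together with the trace and Poincar\'e estimates absorbs the right-hand side, bounding $\|u\|_V$ uniformly in $t$ and in $k$; this yields a solution $u_k$ of the truncated problem. The monotone sign of $g_k$ and the boundedness of $\mu_1^n,\mu_2^n$ then give, via the Stampacchia truncation method applied together with the local boundedness estimates for $A_2$-degenerate operators of \cite{FKS}, an $L^\infty$ bound on $u_k$ that is independent of $k$. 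For $k$ larger than this bound one has $g_k(u_k)=g(u_k)$, so $u_k\in V\cap L^\infty(\Omega)$ is a weak solution of \eqref{bdy2smooth}.

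The step I expect to be the main obstacle is carrying the $L^\infty$/maximum-principle estimate up to the boundary. The weight $w$ degenerates precisely on $\partial\Omega$, where both the Dirichlet datum and the nonlinear Neumann-type condition are imposed; the interior De Giorgi-Nash-Moser theory of \cite{FKS} controls the solution in the bulk, but the truncation argument and the pairing of the boundary integrals $\int_{\Gamma_2}$ against traces of $V$ must be adapted near the degeneracy. Controlling these weighted boundary terms uniformly is exactly where the $A_2$ trace inequalities are indispensable; once that is in place, everything else is a routine adaptation of the non-degenerate case $\alpha=0$.
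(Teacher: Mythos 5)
Your proposal takes essentially the same route as the paper: the paper's entire proof is the one-line remark that the result follows from the Lax--Milgram theorem in the weighted setting of \cite{FKS} combined with a standard topological degree argument, which is precisely your scheme (weighted Lax--Milgram solution operator, truncation of the nonlinearity, compact trace, Leray--Schauder degree, and a Stampacchia-type $L^\infty$ bound). Your write-up simply supplies the details the paper omits, and they are consistent with the standard way such an argument is carried out.
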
 

\subsection{Estimates}

The proof of the following lemma is identical to the proof of Lemma \eqref{estim1}

\begin{lemma}\label{estim2}
Let $\gamma >1$ and $u_n$ be a weak solution of \eqref{bdy2smooth} with $\mu_1^n,\mu_2^n \in L^2(\Omega)$ and $L^2(\Gamma_2)$ respectively. Then for every $\theta>1$, there exists a constant $C>0$ depending on a bound of the $L^1$ norms of $\mu^n_1,\mu^n_2$ such that  

\begin{equation}
\|u_n\|_{L^\gamma(\Gamma_2)} \leq C
\end{equation}
and 
\begin{equation}
\int_{\Omega} d(x,\partial \Omega)^\alpha \frac{|\nabla u_n|^2}{(1+|u_n|)^\theta} \leq C . 
\end{equation}
\end{lemma}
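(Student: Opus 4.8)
The plan is to mimic exactly the argument of Lemma~\ref{estim1}, the only new feature being the $A_2$-weight $d(x,\partial\Omega)^\alpha$ carried along in the energy term. First I would write down the weak formulation \eqref{bdy2smooth} and insert the same admissible test function $\varphi=\phi_\theta(u_n)$, where $\phi_\theta$ is the bounded, odd primitive of $(1+|t|)^{-\theta}$ defined in the proof of Lemma~\ref{estim1}. Since $\phi_\theta$ is globally bounded and Lipschitz and $u_n\in H^1(\Omega,d(x,\partial\Omega)^\alpha)\cap L^\infty(\Omega)$, the chain rule of Stampacchia's theorem applies verbatim in the weighted Sobolev setting of \cite{FKS}, so $\phi_\theta(u_n)$ is a legitimate test function and $\nabla\phi_\theta(u_n)=\phi_\theta'(u_n)\nabla u_n=(1+|u_n|)^{-\theta}\nabla u_n$.

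With this choice the left-hand energy term becomes
\begin{equation*}
\int_\Omega d(x,\partial\Omega)^\alpha\,\nabla u_n\cdot\nabla\phi_\theta(u_n)
=\int_\Omega d(x,\partial\Omega)^\alpha\,\frac{|\nabla u_n|^2}{(1+|u_n|)^\theta},
\end{equation*}
which is precisely the quantity we wish to bound. The boundary integral yields $\int_{\Gamma_2}|u_n|^{\gamma-1}u_n\,\phi_\theta(u_n)$, a nonnegative term since $\phi_\theta$ and $r\mapsto|r|^{\gamma-1}r$ share the sign of their argument. The two data terms $\int_\Omega\phi_\theta(u_n)\,\mu_1^n$ and $\int_{\Gamma_2}\phi_\theta(u_n)\,\mu_2^n$ are each controlled by $\|\phi_\theta\|_\infty$ times the $L^1$-norms of $\mu_1^n,\mu_2^n$. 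Collecting terms, I would deduce
\begin{equation*}
\int_\Omega d(x,\partial\Omega)^\alpha\,\frac{|\nabla u_n|^2}{(1+|u_n|)^\theta}
+\int_{\Gamma_2}|u_n|^{\gamma-1}u_n\,\phi_\theta(u_n)
\leq C\bigl(\|\mu_1^n\|_{L^1}+\|\mu_2^n\|_{L^1}\bigr).
\end{equation*}
The elementary inequality $|u_n|^\gamma\leq C\,|u_n|^{\gamma-1}u_n\,\phi_\theta(u_n)$ (valid for large $|u_n|$, with the constant depending on $\theta$, exactly as in Lemma~\ref{estim1}) then converts the boundary term into control of $\|u_n\|_{L^\gamma(\Gamma_2)}$, and discarding the nonnegative boundary term in the first estimate gives the weighted gradient bound.

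Because the weight enters only as a multiplicative factor inside the coercive quadratic form and never interacts with the test function or the nonlinearity, there is genuinely no new obstacle here: the role previously played by the flat Dirichlet form $\int_\Omega|\nabla u_n|^2(\cdots)$ is now played by its weighted analogue, and every estimate passes through unchanged. The only point deserving a word of care is the admissibility of $\phi_\theta(u_n)$ as a test function in the weighted space, which I would justify by the density and truncation results of Fabes--Kenig--Serapioni \cite{FKS} for $A_2$-weights; the $A_2$ condition for $d(x,\partial\Omega)^\alpha$ with $\alpha\in(-1,1)$ was already verified in the introduction. Thus the proof is, as the authors state, \emph{identical} to that of Lemma~\ref{estim1}, and I would simply reproduce it with $d(x,\partial\Omega)^\alpha$ inserted in the gradient term.
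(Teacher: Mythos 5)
Your proposal is correct and follows exactly the route the paper intends: the paper's own ``proof'' of this lemma is simply the remark that it is identical to that of Lemma~\ref{estim1}, and you reproduce that argument with the weight $d(x,\partial\Omega)^\alpha$ carried through, justifying the test function $\phi_\theta(u_n)$ in the weighted setting via \cite{FKS}. As a minor point in your favor, you state the elementary inequality relating $|u_n|^\gamma$ and $|u_n|^{\gamma-1}u_n\,\phi_\theta(u_n)$ in the direction actually needed (the paper writes it with the inequality sign reversed, evidently a typo), and you correctly note it only holds for large $|u_n|$.
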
 

Before stating the lemma concerning the regularity on the gradient of $u_n$, we recall the Sobolev embedding proved in \cite{FKS}. 

\begin{thm} (see \cite{FKS})
Let $\Omega$ be an open set of $\RR^N$ and $w$ an $A_2$ function; Then there exist constants $C >0$ and $\delta >0$ depending on $\Omega$ and the weight $w$ such that for all $u \in C^\infty_0(\Omega)$ and all $1 \leq k \leq \frac{N}{N-1} +Ê\delta$ the following holds
\begin{equation}\label{embedSob}
\|u\|_{L^{2k}(\Omega,w)} \leq C \|\nabla u \|_{L^2(\Omega,w)}
\end{equation}
\end{thm}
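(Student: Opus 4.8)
The plan is to reduce this one–weight Sobolev estimate for $w\in A_2$ to a weighted boundedness property of the first–order Riesz potential, and then to read off the admissible range of exponents from the self–improving properties of $A_2$ weights. First I would use that $u\in C_0^\infty(\Omega)$, extend $u$ by zero to all of $\RR^N$, and invoke the representation of a compactly supported function by the Newtonian potential of its gradient, which yields the pointwise bound
\[
\abs{u(x)}\leq c_N\int_{\RR^N}\frac{\abs{\nabla u(y)}}{\abs{x-y}^{N-1}}\,dy=:c_N\,I_1(\abs{\nabla u})(x).
\]
Since $\Omega$ is bounded, it then suffices to establish a Muckenhoupt--Wheeden type inequality for the truncated Riesz potential with the \emph{same} weight on both sides, namely $\norm{I_1 f}_{L^{2k}(\Omega,w)}\leq C\norm{f}_{L^2(\Omega,w)}$ for nonnegative $f$ supported in $\Omega$, the constant being allowed to depend on $\diam\Omega$.

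Next I would record the two structural facts about $A_2$ weights that carry the argument. Writing $d\mu=w\,dx$: (i) $\mu$ is doubling, $\mu(2B)\leq C_d\,\mu(B)$, so the Hardy--Littlewood maximal operator $M_\mu$ is bounded on $L^p(d\mu)$ for every $p>1$; and (ii) $w$ obeys a reverse Hölder inequality, i.e. there is $s>1$ with $\lp\frac1{\abs B}\int_B w^s\rp^{1/s}\leq \frac{C}{\abs B}\int_B w$, while the $A_2$ condition itself gives the dual control $\int_B w^{-1}\leq C\abs B^2/\mu(B)$. The exponent $\delta$ in the statement will be produced by, and quantified in terms of, the reverse Hölder exponent $s$ together with the doubling exponent of $\mu$.

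The fractional–integral inequality I would prove by a Hedberg–type splitting. For each $x$ and a free radius $\rho>0$, split $I_1 f(x)$ into its contribution from $\abs{x-y}<\rho$ and from $\abs{x-y}\geq\rho$. Decomposing the near contribution into dyadic annuli and using Cauchy--Schwarz in $d\mu$ together with the $A_2$ balance between $w$ and $w^{-1}$, the near part is dominated by $\rho$ times a weighted maximal average of $f$; by Cauchy--Schwarz in $d\mu$ the far part is dominated by $\norm f_{L^2(w)}$ times a tail $\lp\int_{\abs{x-y}\geq\rho}\abs{x-y}^{-2(N-1)}w^{-1}\,dy\rp^{1/2}$. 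Summing the far dyadic shells and controlling this tail is precisely where the reverse Hölder inequality enters, and it is what fixes the range of $k$ for which the tail decays in $\rho$. Optimizing over $\rho$ converts the two pieces into a product of a maximal function and $\norm f_{L^2(w)}$; the $L^p(d\mu)$ boundedness of $M_\mu$ then gives $\norm{I_1 f}_{L^{2k}(w)}\leq C\norm f_{L^2(w)}$ for $1\leq k\leq \frac{N}{N-1}+\delta$. Globalizing from balls to $\Omega$ is immediate here: the compact support of $u$ lets me work on a single ball containing $\Omega$, with no boundary contributions.

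The genuinely delicate step is the exponent bookkeeping in the previous paragraph. Controlling the near part at exactly the $L^2(w)$ integrability level — rather than at $L^{2k}$, which would make the estimate circular — forces one to exploit the $A_2$ identity relating $w$ and $w^{-1}$ and the reverse Hölder self–improvement, and it is this interplay that pins the threshold to $\frac{N}{N-1}$ and the gain to some $\delta=\delta(s,C_d)>0$. I expect this to be the main obstacle. An alternative, and perhaps cleaner, route avoids $I_1$ altogether: one starts from the weighted $(1,1)$ isoperimetric/Poincaré inequality for $A_2$ weights and applies the Maz'ya truncation method to $\abs{u}^\beta$, again invoking reverse Hölder to push the exponent past $\frac{N}{N-1}$; the location of the difficulty is the same.
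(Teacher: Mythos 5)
The paper offers no proof of this statement at all: it is imported verbatim from Fabes--Kenig--Serapioni \cite{FKS}, so your proposal can only be measured against that source. Your architecture --- extend by zero, dominate $\abs{u}$ by $c_N I_1(\abs{\nabla u})$, prove a same-weight bound $\norm{I_1 f}_{L^{2k}(\mu)}\le C\norm{f}_{L^2(\mu)}$ for $\mu=w\,dx$ by a Hedberg splitting, and read the admissible $k$ off the $A_2$ structure theory --- is exactly the route of \cite{FKS} (and you correctly assume $\Omega$ bounded, which the sloppily stated theorem needs for $k=1$). But the execution has two genuine gaps, and as written your argument proves the inequality only for $k$ \emph{strictly below} $N/(N-1)$: it misses the endpoint and the gain $\delta$, which is the entire content of the theorem. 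The first gap is the far part. The tail $\bigl(\int_{\abs{x-y}\ge\rho}\abs{x-y}^{-2(N-1)}w^{-1}\,dy\bigr)^{1/2}$ is an integral of $w^{-1}$, and the tools you assign to it --- reverse H\"older for $w$ itself, plus the ``dual control'' $\int_B w^{-1}\le C\abs{B}^2/\mu(B)$ --- yield no decay: combining that dual control with the $A_2$ lower bound $\mu(B_s)\ge c\,(s/R)^{2N}\mu(B_R)$ gives $\int_{B_s}w^{-1}\le C R^{2N}/\mu(B_R)$ \emph{uniformly in} $s$, so summing the dyadic shells the tail is only $O(\rho^{1-N})$, and the Hedberg optimization then pins the critical exponent at exactly $N/(N-1)$, with no $\delta$. (Also, the tail does not depend on $k$ at all; what its decay rate determines is the critical exponent of the interpolation.) What is actually needed is the reverse H\"older inequality for $w^{-1}$ --- equivalently the open-endedness $w\in A_2\Rightarrow w\in A_{2-\epsilon}$ --- which gives $\int_{B_s}w^{-1}\le C (s/R)^{N\theta}\int_{B_R}w^{-1}$ for some $\theta>0$, hence a tail $O(\rho^{1-N+N\theta/2})$, and it is this improvement that pushes the critical exponent strictly past $N/(N-1)$.

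The second gap is the final maximal-function step. With the near part estimated by Cauchy--Schwarz at exponent $2$, i.e.\ by $\rho\,(M_\mu(f^2))^{1/2}$, optimizing $\rho$ against a far part of size $\rho^{-a}K$ produces $\abs{I_1 f}^{2k}\le C\,(M_\mu(f^2))^{ka/(1+a)}K^{2k/(1+a)}$, and on the whole claimed range $1\le k\le 1+1/a$ the exponent $ka/(1+a)$ is $\le 1$, with equality at the critical $k$. So the strong $L^p(\mu)$-boundedness of $M_\mu$ for $p>1$, which you invoke, never applies (and at exponent $1$ boundedness is false); had it applied you would anyway be controlling $\norm{f}_{L^{2k}(\mu)}$ by itself, the circularity you were trying to avoid. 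Two repairs exist. Either keep your bookkeeping and, for $k$ strictly below the (improved) critical exponent, use the weak $(1,1)$ bound for $M_\mu$ together with Kolmogorov's inequality on the finite measure space $(\Omega,\mu)$; combined with the fix of the first gap this already gives the theorem for some $\delta>0$. Or, cleaner and closer to \cite{FKS}: perform the near-part H\"older at exponent $q=2-\epsilon$ using $w\in A_{2-\epsilon}$, so that the maximal function appears as $(M_\mu(f^{q}))^{1/q}$ and lands, at the critical $k$, at the exponent $2/q>1$, where strong boundedness applies and even the endpoint is attained. Note that both repairs, like the fix of the first gap, rest on the same single fact --- the self-improvement of the $A_2$ class --- which your proposal names as the source of $\delta$ but never actually deploys in a place where it changes an exponent.
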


Remark that here the Sobolev exponent is not $\frac{2N}{N-2}$ but $\frac{2N}{N-1} +Ê2\delta$ and this is where we loose regularity in the estimates. The constant $\delta$ depends on the weight $d(x,\partial \Omega)^\alpha$ and the domain $\Omega$.

\begin{lemma}
Let $\mu_1^n$ and $\mu_2^n$ as before. Consider $u_n$ a weak solution of \eqref{bdy2smooth} such that 
$$\int_{\Gamma_2} |u_n|^\gamma \leq C$$
for some constant $C>0$ depending on $\theta$
and 
\begin{equation}\label{estimInterm}
\int_{\Omega} d(x,\partial \Omega)^\alpha \frac{|\nabla u_n|^2}{(1+|u_n|)^\theta} \leq D
\end{equation}
for $\theta >1$ and some constant $D>0$ depending on $\theta$ and on a bound of the $L^1$ norms of $\mu^n_1,\mu^n_2$. Therefore, we have that $u_n$ is bounded (in terms of $q,C,D$) in $W^{1,q}(\Omega,d(x,\partial \Omega)^\alpha)$ for every $1 \leq q < \frac{N}{N-1}+\delta$. 
\end{lemma}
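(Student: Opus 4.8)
The plan is to mirror the non-degenerate argument from the previous section, with the $A_2$-weighted Sobolev inequality \eqref{embedSob} replacing the classical Sobolev embedding; the only novelty is bookkeeping the weight $d(x,\partial\Omega)^\alpha$ through H\"older's inequality. First I would fix $q\in[1,2)$ and split the gradient exactly as before, inserting the weight symmetrically:
\begin{equation*}
\int_\Omega d^\alpha |\nabla u_n|^q = \int_\Omega \frac{d^{\alpha q/2}|\nabla u_n|^q}{(1+|u_n|)^{\theta q/2}}\, d^{\alpha(1-q/2)}(1+|u_n|)^{\theta q/2},
\end{equation*}
where I abbreviate $d=d(x,\partial\Omega)$. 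Applying H\"older with exponents $2/q$ and $2/(2-q)$ and using the weighted energy bound \eqref{estimInterm} gives
\begin{equation*}
\int_\Omega d^\alpha|\nabla u_n|^q \leq D^{q/2}\left(\int_\Omega d^\alpha (1+|u_n|)^{\frac{\theta q}{2-q}}\right)^{\frac{2-q}{2}},
\end{equation*}
so the task reduces to controlling the weighted $L^p$ norm of $u_n$ for $p=\frac{\theta q}{2-q}$.

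Next I would absorb this norm using the weighted Sobolev inequality. The key point is that the admissible exponent in \eqref{embedSob} is $2k$ with $k$ allowed up to $\frac{N}{N-1}+\delta$, so the relevant Sobolev conjugate is $\frac{2N}{N-1}+2\delta$ rather than the classical $\frac{2N}{N-2}$. I would choose $\theta>1$ close to $1$ and verify that $\frac{\theta q}{2-q}$ is strictly below this weighted Sobolev exponent precisely when $q<\frac{N}{N-1}+\delta$; this is the inequality that pins down the range of $q$ in the statement. Then, as in the $\alpha=0$ case, I would write
\begin{equation*}
\int_\Omega d^\alpha(1+|u_n|)^{\frac{\theta q}{2-q}} \leq \varepsilon\int_\Omega d^\alpha |u_n|^{2k} + C(\varepsilon),
\end{equation*}
apply \eqref{embedSob} to bound $\int_\Omega d^\alpha|u_n|^{2k}$ by $\left(\int_\Omega d^\alpha|\nabla u_n|^2\right)^k$, and close the estimate by taking $\varepsilon$ small, exactly as before. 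A density argument extends \eqref{embedSob} from $C^\infty_0(\Omega)$ to functions vanishing only on $\Gamma_1$, which is legitimate since $|\Gamma_1|>0$.

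The main obstacle, and the reason the result is weaker than in the $\alpha=0$ case, is the loss in the Sobolev exponent: in \eqref{embedSob} the gain over the $L^2$-integrability is only $\delta$ rather than the full classical gain, so the self-improving iteration that closes the gradient estimate admits a strictly smaller range of $q$. Consequently the final weighted $W^{1,q}$ bound holds only for $q<\frac{N}{N-1}+\delta$, and one cannot push $q$ up to the optimal threshold. A secondary technical subtlety is ensuring that the full energy $\int_\Omega d^\alpha|\nabla u_n|^2$ appearing after applying \eqref{embedSob} is itself controlled; here I would note that the bound in Lemma \ref{estim2} is with the $(1+|u_n|)^\theta$ denominator, so one must combine it with the $L^\gamma(\Gamma_2)$ control of the trace and the structure of the weighted inequality, choosing $\theta$ judiciously so that the exponents match and everything stays finite. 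Apart from this exponent accounting, the argument is identical in structure to the non-degenerate one.
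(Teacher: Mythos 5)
Your opening H\"older step coincides with the paper's, but the absorption scheme you propose afterwards has a genuine gap. The weighted Sobolev inequality \eqref{embedSob} is $L^2$-based: it controls $\|u\|_{L^{2k}(\Omega,d(x,\partial\Omega)^\alpha)}$ by $\|\nabla u\|_{L^{2}(\Omega,d(x,\partial\Omega)^\alpha)}$. Applying it to $u_n$ itself, as you do, makes the full weighted Dirichlet energy $\int_\Omega d(x,\partial\Omega)^\alpha|\nabla u_n|^2$ appear on the right-hand side. For measure data this is precisely the quantity that fails to be uniformly bounded in $n$, and it cannot be absorbed by the left-hand side $\int_\Omega d(x,\partial\Omega)^\alpha|\nabla u_n|^q$, which is a strictly weaker norm since $q<2$. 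This is why the non-degenerate argument does not transfer ``exactly as before'': when $\alpha=0$ the embedding used is $\|u_n\|_{L^{q^*}(\Omega)}\le C\|\nabla u_n\|_{L^q(\Omega)}$, with the \emph{same} exponent $q$ as the quantity being estimated, and that matching of exponents is exactly what lets the $\varepsilon$-absorption close; no $L^q$-based weighted inequality is stated or available here. You do notice this problem in your final paragraph, but it is the crux of the matter, not a ``secondary technical subtlety'', and the repair you sketch cannot work: no choice of $\theta$, and no use of the $L^\gamma(\Gamma_2)$ trace bound, converts the bound \eqref{estimInterm} on $\int_\Omega d(x,\partial\Omega)^\alpha|\nabla u_n|^2/(1+|u_n|)^{\theta}$ into a uniform bound on $\int_\Omega d(x,\partial\Omega)^\alpha|\nabla u_n|^2$.

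The paper's proof closes the estimate by applying \eqref{embedSob} not to $u_n$ but to the composition $\phi_{\theta/2}(u_n)$, where $\phi_\theta$ is the function introduced in the proof of Lemma \ref{estim1}. Since $|\nabla\phi_{\theta/2}(u_n)|^2=|\nabla u_n|^2/(1+|u_n|)^{\theta}$, the bound \eqref{estimInterm} says exactly that $\phi_{\theta/2}(u_n)$ is bounded in $H^1(\Omega,d(x,\partial\Omega)^\alpha)$, so \eqref{embedSob} applies with a uniform constant and yields a bound in $L^{2k}(\Omega,d(x,\partial\Omega)^\alpha)$. Because $\phi_{\theta/2}(r)$ grows like $r^{1-\theta/2}$ for large $r$, this gives $|u_n|^{k(2-\theta)}$ bounded in $L^1(\Omega,d(x,\partial\Omega)^\alpha)$; equating $\theta q/(2-q)=k(2-\theta)$, then letting $\theta$ tend to $1$ and $k$ tend to $\frac{N}{N-1}+\delta$, produces the admissible range of $q$ (the additive constant is handled by the integrability of $d(x,\partial\Omega)^\alpha$). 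This substitution of the bounded-energy function $\phi_{\theta/2}(u_n)$ for $u_n$ is the key idea missing from your proposal; without it the argument does not close. Incidentally, your exponent bookkeeping is also off: the condition $\theta q/(2-q)<2k$ with $\theta\to1$ gives $q<4k/(1+2k)$, which is not equivalent to $q<\frac{N}{N-1}+\delta$, and indeed the range that the paper's own computation delivers is $q<\frac{2N+2\delta(N-1)}{2N-1+\delta}$, the one appearing in Theorem \ref{exist2}.
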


\begin{proof}

We write, by H\"older inequality for $q \in [1,2)$
$$\int_\Omega d(x,\partial \Omega)^\alpha|\nabla u_n |^q  =  $$
$$\int_\Omega d(x,\partial \Omega)^{q\alpha/2} \frac{|\nabla u_n|^q}{(1+|u_n|)^{q\theta/2 }} d(x,\partial \Omega)^{\alpha-q\alpha/2}(1+|u_n|)^{\theta q/2}\leq$$
$$\Big \{ \int_\Omega d(x,\partial \Omega)^\alpha \frac{|\nabla u_n|^2}{(1+|u_n|)^{\theta}} \Big \} ^{q/2} \Big \{\int_\Omega d(x,\partial \Omega)^\alpha (1+|u_n|)^{\frac{\theta q}{2-q}} \Big \}^{\frac{2-q}{2}} \leq $$
$$D_1 \Big \{\int_\Omega d(x,\partial \Omega)^\alpha (1+|u_n|)^{\frac{\theta q}{2-q}} \Big \}^{\frac{2-q}{2}} . $$
We now use the Sobolev embedding \eqref{embedSob} to estimate the last term. Since $u_n=0$ on $\Gamma_1 \subset \partial \Omega$, we have the Sobolev embedding 
$$\|u_n\|_{L^{2k}(\Omega,d(x,\partial \Omega)^\alpha)} \leq C \|\nabla u_n \|_{L^2(\Omega,d(x,\partial \Omega)^\alpha)} $$

for all $1\leq k \leq \frac{N}{N-1}+\delta$ for some $\delta >0$. 

On the other hand from the inequality \eqref{estimInterm}, we have that $\left \{ \phi_{\theta /2}(u_n) \right \}_n$ is bounded in $H^1(\Omega,d(x,\partial \Omega)^\alpha)$. By the previous embedding, the sequence $\left \{ \phi_{\theta /2}(u_n) \right \}_n$ is also bounded in $L^{2k}(\Omega,d(x,\partial \Omega)^\alpha)$ and since, when $r$ is big enough, the function $\phi_{\theta/2}(r)$ behaves like $r^{1-\theta/2}$, we deduce that

$$|u_n|^{1-\theta/2} \in L^{2k}(\Omega,d(x,\partial \Omega)^\alpha) $$
and then 
$$|u_n|^{2k-k\theta} \in L^1(\Omega,d(x,\partial \Omega)^\alpha).$$

Therefore, the sequence $\left \{ u_n \right \}_n$ is bounded in $L^q(\Omega,d(x,\partial \Omega)^\alpha)$ if 
$$\frac{\theta q}{2-q}=2k-k\theta , i.e.$$
$$q= \frac{2 (2-\theta) k}{\theta+k(2-\theta)} >1$$
for $1< \theta <2$. Taking $\theta$ arbitrarily close to $1$, one gets 
$$q < \frac{2N+2\delta(N-1)}{2N-1+\delta}.$$

Since $d(x,\partial \Omega)^\alpha$ is integrable, one gets a bound for the last term in the previous range of $q$. 
\end{proof}

The previous theorem admits the following corollary by just using the trace embedding in \cite{nekvinda}. 

\begin{cor}
Let $\mu^1_,\mu_2^n $ as before. Consider $u_n$ a weak solution of \eqref{bdy2smooth} such that 
$$\int_{\Gamma_2} |u_n|^\gamma \leq C$$
for some constant $C>0$ 
and 
\begin{equation}
\int_{\Omega} d(x,\partial \Omega)^\alpha \frac{|\nabla u_n|^2}{(1+|u_n|)^\theta} \leq D
\end{equation}
for $\theta >1$ and some constant $D>0$ depending on $\theta$. Then the trace on $ \partial \Omega$ of $u_n$ denoted $Tu_n$ is bounded in $W^{1-\frac{1+\alpha}{q},q}(\partial \Omega)$ for every $1 < q < \frac{2N+2\delta(N-1)}{2N-1+\delta}$. 
\end{cor}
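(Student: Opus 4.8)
The plan is to derive the boundary regularity from the interior-type weighted bound $u_n \in W^{1,q}(\Omega, d(x,\partial\Omega)^\alpha)$ established in the previous lemma, which is uniform in $n$ for every $q$ in the stated range $1 < q < \frac{2N+2\delta(N-1)}{2N-1+\delta}$, and then transfer it to $\partial\Omega$ via a weighted trace embedding. The key tool is the trace theorem for weighted Sobolev spaces with $A_2$-weights of the form $d(x,\partial\Omega)^\alpha$, as proved by Nekvinda in \cite{nekvinda}: for a bounded Lipschitz (here smooth) domain $\Omega$, the trace operator $T$ maps $W^{1,q}(\Omega, d(x,\partial\Omega)^\alpha)$ continuously into the fractional Sobolev (Besov) space $W^{1-\frac{1+\alpha}{q},q}(\partial\Omega)$, the loss of smoothness $\frac{1+\alpha}{q}$ rather than the unweighted $\frac{1}{q}$ being exactly the effect of the weight degenerating like a power of the distance at the boundary.

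First I would invoke the hypotheses of the corollary: the bounds $\int_{\Gamma_2}|u_n|^\gamma \le C$ and the weighted energy estimate \eqref{estimInterm} are precisely the assumptions of the preceding lemma, so I may assume its conclusion, namely that $\{u_n\}_n$ is bounded in $W^{1,q}(\Omega, d(x,\partial\Omega)^\alpha)$ for every $1 \le q < \frac{2N+2\delta(N-1)}{2N-1+\delta}$, with the bound depending only on $q$, $C$ and $D$. Second, for a fixed such $q$ strictly greater than $1$, I would apply the weighted trace embedding of \cite{nekvinda} to conclude
\begin{equation*}
\|Tu_n\|_{W^{1-\frac{1+\alpha}{q},q}(\partial\Omega)} \le C_{\mathrm{tr}} \, \|u_n\|_{W^{1,q}(\Omega, d(x,\partial\Omega)^\alpha)},
\end{equation*}
where $C_{\mathrm{tr}}$ depends only on $\Omega$, $\alpha$ and $q$. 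Combining the two displays gives the asserted uniform bound $\|Tu_n\|_{W^{1-\frac{1+\alpha}{q},q}(\partial\Omega)} \le \tilde C$ with $\tilde C = \tilde C(q,C,D)$.

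The step requiring the most care is the correct identification of the target trace space, and in particular the exact power $1-\frac{1+\alpha}{q}$ of smoothness lost. This must be checked against the statement of the trace theorem in \cite{nekvinda}, verifying that the weight $d(x,\partial\Omega)^\alpha$ falls within the admissible class (it is $A_2$ for $\alpha \in (-1,1)$, as recorded in the introduction) and that the resulting trace exponent, together with the requirement $1-\frac{1+\alpha}{q} > 0$, is compatible with the range of $q$ under consideration. One should note that the restriction $q > 1$ (rather than $q \ge 1$) appearing in the corollary is needed precisely so that the fractional trace space is well-defined and the embedding is continuous; the endpoint $q=1$ is excluded here, unlike in the interior estimate. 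Once the trace theorem is cited with the right exponents, no further computation is needed and the conclusion follows immediately.
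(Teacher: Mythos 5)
Your proposal is correct and follows exactly the paper's own route: the hypotheses are those of the preceding lemma, which yields the uniform bound in $W^{1,q}(\Omega, d(x,\partial\Omega)^\alpha)$ for $1<q<\frac{2N+2\delta(N-1)}{2N-1+\delta}$, and the conclusion then follows by applying the weighted trace embedding of \cite{nekvinda}, which is precisely what the paper does. Your additional remarks on checking the admissibility of the $A_2$ weight and the exponent $1-\frac{1+\alpha}{q}$ only make explicit what the paper leaves implicit.
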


The passage to the limit works exactly as in the previous section and this proves Theorem \ref{exist2}. 

\section{Extensions of the previous result}

Thanks to the approach developped in \cite{BG}, it would be possible to extend our results to more general problems, such as 

\begin{equation}\label{boundaryExtended}
\left \{ 
\begin{array}{c}
-div A(x,\nabla u)=\mu_1,\,\,\mbox{in $\Omega$},\\
u=0,\,\,\mbox{on $\Gamma_1 \subset \partial \Omega$},\\
A(x,\nabla u) \cdot \nu =\mu_2+h(u),\,\,\,\mbox{on $\Gamma_2 \subset \partial \Omega$}
\end{array} \right. 
\end{equation}
under structural assumptions on $A(x,\nabla u)$ (as to be monotone on the gradient term (like for instance the case of the $p-$laplacian) )and structural assumptions on $h$. 

We refer the interested reader to \cite{BG} for the structural assumptions on the data. Remark that we can assume, thanks to the present work, degeneracy of the operator $A(x, .)$ close to the boundary.

\bibliographystyle{abbrv}
\bibliography{bibliofile}

\end{document}